\title[Derivatives of the tree function]{Derivatives the tree function}
\subjclass[2000]{05C05, 11B83, 26A48}
\keywords{Cayley trees, Bernstein function, completely monotonic functions}
\date{}
\author{Matthieu Josuat-Vergès}
\thanks{This work was supported by ANR project CARMA}
\address{Institut Gaspard Monge, Université Paris-Est Marne-la-Vallée\\
5 Bd. Descartes\\
Champs-sur-Marne\\
77454 Marne-la-Vallée cedex 2\\ France}
\email{matthieu.josuat-verges@univ-mlv.fr}
\newtheorem{theo}{Theorem}[section]
\newtheorem{lemm}[theo]{Lemma}
\newtheorem{prop}[theo]{Proposition}
\theoremstyle{definition}
\newtheorem{defi}[theo]{Definition}
\newcommand{\vertex}[3]{
  \pscircle[fillstyle=solid,fillcolor=white](#1,#2){0.25}
  \cput[linestyle=none](#1,#2){\small #3}
}
\newcommand{\rvertex}[3]{
  \pscircle[doubleline=true,fillstyle=solid,fillcolor=white](#1,#2){0.25}
  \cput[linestyle=none](#1,#2){\small #3}
}
\DeclareMathOperator{\unl}{unl}
\DeclareMathOperator{\imp}{imp}
\begin{document}

\begin{abstract}
We study some sequences of polynomials that appear when we consider the successive derivatives 
of the tree function (or Lambert's W function). We show in particular that they are related with
a generalization of Cayley trees, called Greg trees.
Besides the combinatorial result in itself, it is interesting to see how this is related
with previous work: similar problems were considered first by Ramanujan, and more recently in the
theory of completely monotonic functions and its link with probability.
Also of great interest is the fact that these Greg trees were introduced in a problem of textual 
criticism, as a kind a genealogical trees, where they had a priori no mathematical meaning.
\end{abstract}

\maketitle


\section{Introduction}

Lambert's W function is a special function that has been widely studied and naturally appear in 
various contexts in pure or applied sciences, see for example the comprehensive article
by Corless et al. \cite{corless}.
It is usually defined by the equation
\[
  W(z)e^{W(z)} = z.
\]
We are interested here in the computation of derivatives of this function.
We can derive the first values from the defining equation, and the general pattern that
appears is 
\begin{equation}\label{defP}
  \diff[n]{}{z} W(z) =  \frac{ \exp{(-nW(z))}  }{  (1+W(z))^{2n-1}  } P_n(W(z))
\end{equation}
for some polynomials $P_n$ (this is done in \cite{corless}). 
Kalugin and Jeffrey \cite{kalugin} proved that the polynomials $(-1)^{n-1}P_n$ have positive coefficients,
settling a conjecture of Sokal. This result was motivated by the fact
that we can deduce the sign of all these functions: they are alternately positive and negative,
and it follows that $W(z)$ is a so-called Bernstein function \cite{schilling}.
Various similar results have been obtained in a rather analytical perspective \cite{kalugin2,kalugin3},
and a probability law related with $W(z)$ as a Bernstein function is studied in \cite{pakes}.

In fact, the polynomials $P_n$ are closely related with another sequence that appeared first 
in Ramanujan's notebook (see \cite{berndt} and \cite{ramanujan}), and inspired quite a few
combinatorial works \cite{chen,dumont,guo,howard,linzeng,zeng}. The goal of this article 
is to introduce {\it Greg trees} in this context. They are a generalization of Cayley trees, and 
were defined by Flight \cite{flight} as some kind of genealogical trees for manuscripts.
(Flight named these trees after Sir Walton Wilson Greg, a renowned British scholar who worked
in textual criticism, and was in particular an expert of Shakespeare's texts \cite{wilson}.)
Similar trees were also considered in \cite{felsenstein} as some kind of phylogenetic trees.
The connection was noticed by 
Knuth, in the correction of Exercise~50 of Section~7.2.1.5 in the Prefascicle~3B of the Art of Computer Programming
\cite{knuth}. Here, we explain this connection, by showing that some operations on Greg trees can prove
formulas of the kind of Equation~\eqref{defP}.
In fact, we study here three closely-related sequences of polynomials $F_n$, $G_n$ and $H_n$.
The first one is related with Ramanujan's polynomials, and the latter two give the enumeration of 
Greg trees, respectively in the rooted and unrooted cases. (Note that these sequences can be seen 
as particular cases of a two-variable polynomials, see Section~\ref{fr}. However, from the point
of view of the enumeration of Greg trees, only these cases are consedered.)

\section{Definitions}

Since the defining equation $W(z)e^{W(z)}=z$ has several solutions, $W(z)$ can be considered as a multivalued function.
Here we only consider the principal branch, and see $W(z)$ as a holomorphic function on $\mathbb{C}-(-\infty,-\frac 1e]$,
such that
\[
  W(z) = \sum_{n \geq 1 }   (-n)^{n-1} \frac{z^n}{n!}
\]
in a neighborhood of $0$ (see \cite{corless}).
From the function $W(z)$, we define the tree function:
\[  
  T_1(z) = -W(-z).
\]
It is well-known that $n^{n-1}$ is number of rooted Cayley trees \cite{cayley}, so that $T_1(z)$ is the
exponential generating function of these.
More generally, let us define:
\[
  T_\alpha(z) = \sum_{n \geq 1 }   n^{n-\alpha} \frac{z^n}{n!}. 
\]
We have $T(z)=T_1(z)$ and we will also consider the cases where $\alpha=0$ or $\alpha=2$.
In particular, the generating function unrooted Cayley trees is:
\[
  T_2(z) = \sum_{n \geq 1 }   n^{n-2} \frac{z^n}{n!}.  
\]
There are several relations linking these functions, for example:
\[
  T_0(z)=\frac{1}{1-T(z)}, \qquad T_2'(z) = \frac{T(z)}z, \qquad T_2(z)=T(z) - \tfrac 12 T(z)^2.
\]
We will use these identities in the sequel but leave as an exercise to find either an analytical
or a bijective proof for each of them. Whereas $T_1$ and $T_2$ are clearly interesting in a
combinatorial context, we will see that $T_0$ is also related with a nice sequence of polynomials,
but we do not get into other cases.

\begin{prop}
There are polynomials $F_n(x)$, $G_n(x)$ and $H_n(x)$ such that for any $n\geq1$ we have:
\begin{equation}\label{defF}
\diff[n]{}{z} T_0(z) =  \frac{  \exp{ (nT(z)) }  }{  (1-T(z))^{n+2} }  F_n\left(   \frac{T(z)}{1-T(z)}   \right).
\end{equation}

\begin{equation}\label{defG}
\diff[n]{}{z} T_1(z) =  \frac{  \exp{ (nT(z)) }  }{  (1-T(z))^n }  G_n\left(   \frac{T(z)}{1-T(z)}   \right).
\end{equation}

\begin{equation}\label{defH}
\diff[n]{}{z} T_2(z) =  \frac{  \exp{ (nT(z)) }  }{  (1-T(z))^{n-1} } H_n\left(  \frac{T(z)}{1-T(z)}   \right).
\end{equation}

With the initial case $F_1(x)=G_1(x)=H_1(x)=1$, these polynomials satisfy the recursion:
\begin{equation} \label{recGH} \begin{split}
F_{n+1}(x) &=  (2n+2 + (n+2)x)  F_n(x) + (1+x)^2 F'_n(x), \\
G_{n+1}(x) &=  (2n + nx)    G_n(x) + (1+x)^2 G'_n(x), \\
H_{n+1}(x) &=  (2n-1 + (n-1)x) H_n(x) + (1+x)^2 H'_n(x).
\end{split} \end{equation}
\end{prop}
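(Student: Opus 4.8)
The plan is to prove all three formulas \eqref{defF}, \eqref{defG}, \eqref{defH} simultaneously by induction on $n$, since they all have the same structure and will yield the same kind of recursion. For the base case $n=1$, one checks directly that $T_0'(z) = T_1'(z)/(1-T(z))$... actually more carefully: from $T_0 = 1/(1-T)$ we get $T_0' = T'/(1-T)^2$, and from $Te^{?}$... let me instead use the key differentiation identity. The crucial computational input is the derivative of $T = T_1$ itself: from $T(z) = -W(-z)$ and $W e^W = z$ one obtains $T e^{-T} = z$, hence differentiating, $T'(z) = \frac{e^{T(z)}}{1-T(z)}$. This is the engine of the whole induction. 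I would record this first as a lemma (or just cite that it follows from the defining equation, analogously to \eqref{defP}).

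For the inductive step, suppose \eqref{defG} holds for $n$. I differentiate the right-hand side
\[
\frac{e^{nT}}{(1-T)^n}\,G_n\!\left(\frac{T}{1-T}\right)
\]
with respect to $z$, using the chain rule and substituting $T' = e^T/(1-T)$ everywhere it appears. Writing $u = T/(1-T)$ for brevity, note the convenient facts $1-T = 1/(1+u)$, $e^{nT} = e^{nT}$ stays, and $\frac{d}{dz}\bigl(\tfrac{T}{1-T}\bigr) = \frac{T'}{(1-T)^2} = \frac{e^T}{(1-T)^3} = e^{T}(1+u)^3$. Differentiating the three factors $e^{nT}$, $(1-T)^{-n}$, and $G_n(u)$ in turn produces, after pulling out a common factor $\frac{e^{(n+1)T}}{(1-T)^{n+1}}$, exactly an expression of the form $\bigl[(\text{linear in }u)\,G_n(u) + (1+u)^2 G_n'(u)\bigr]$; matching the linear coefficient gives $2n + nx$. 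The same computation with the exponents $n+2$ (for $F$) and $n-1$ (for $H$) in place of $n$ changes only that linear coefficient, giving $2n+2+(n+2)x$ and $2n-1+(n-1)x$ respectively. This simultaneously shows the $F_n, G_n, H_n$ are genuinely polynomials (an easy sub-induction: the recursion in \eqref{recGH} maps polynomials to polynomials) and establishes \eqref{recGH}.

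The main obstacle is bookkeeping: one must be careful that every stray power of $(1-T)$ and every $T'$ is correctly converted, and that the substitution $x = T/(1-T)$ is consistently applied so that what remains really is a polynomial identity in $x$ rather than an identity only on the curve parametrized by $z$. The cleanest way to avoid errors is to treat $x$ as a formal variable: set $T'= e^T/(1-T)$, express $1-T$, $T$, and $\frac{d}{dz}\bigl(\tfrac{T}{1-T}\bigr)$ all in terms of $x$ (via $1-T = (1+x)^{-1}$), and verify that the factor $\frac{e^{(n+1)T}}{(1-T)^{n+1}}$ factors out cleanly in all three cases. Once the algebra is organized this way, the identity $(1+x)\frac{d}{dz} = e^T (1+x)^2 \frac{d}{dx}$-type relation makes the $(1+x)^2 G_n'(x)$ term appear automatically. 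A final remark: one should double-check the base case for $F$, i.e. that $T_0'(z) = \frac{e^{T}}{(1-T)^3}$, which follows from $T_0 = (1-T)^{-1}$ and $T' = e^T/(1-T)$; this confirms $F_1 = 1$, and similarly $H_1 = 1$ from $T_2' = T/z = (1-T)e^T \cdot \frac{T}{(1-T)} \cdot \frac{1}{?}$— more simply, $T_2' = T/z$ and $z = Te^{-T}$ give $T_2' = e^{T}$, consistent with \eqref{defH} at $n=1$ since $(1-T)^{0}H_1 = 1$.
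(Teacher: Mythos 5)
Your proposal is correct and follows essentially the same route as the paper: establish the base case from $T'(z)=e^{T(z)}/(1-T(z))$ (derived from the defining equation $T e^{-T}=z$), then differentiate the right-hand side inductively, pull out the common factor $e^{(n+1)T}/(1-T)^{\ast}$, and read off the recursion in the variable $x=T/(1-T)$ using $1-T=(1+x)^{-1}$. The paper writes out only the $G_n$ case and declares the other two ``similar,'' which matches your observation that only the exponent of $(1-T)$ --- and hence the linear coefficient in the recursion --- changes.
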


\begin{proof}
We only present the case of $T_1(z)$ and the other two are similar.
By differentiation of $T_1(z) = z e^{T_1(z)}$, we get:
\[
  T_1'(z) = e^{T_1(z)} + zT_1'(z) e^{T_1(z)},
\]
hence:
\[
  T_1'(z) = \frac{e^{T_1(z)}}{1-ze^{T_1(z)}} = \frac{e^{T_1(z)}}{1-T_1(z)}.
\]
This proves the case $n=1$, and the general case is done inductively.
Indeed, suppose that $G_n$ exists such that Identity $\eqref{defG}$ is true,
by differentiating both sides, we get:
\begin{align*}
\diff[n+1]{}{z} T_1(z) & =
    \tfrac{ nT'(z) e^{ nT(z) }  }{  (1-T(z))^n }  G_n\left(   \tfrac{T(z)}{1-T(z)}   \right) \\
  & \qquad + \tfrac{ nT'(z) e^{ nT(z) }  }{  (1-T(z))^{n+1} }  G_n\left(   \tfrac{T(z)}{1-T(z)}   \right)
  + \tfrac{ T'(z)  e^{ nT(z) }  }{  (1-T(z))^{n+2} }  G_n'\left(   \tfrac{T(z)}{1-T(z)}   \right),
\end{align*}
and then:
\begin{align*}
\diff[n+1]{}{z} T_1(z) & = 
    \tfrac{ n e^{ (n+1)T(z) }  }{  (1-T(z))^{n+1} }  G_n\left(   \tfrac{T(z)}{1-T(z)}   \right) \\
  & \qquad + \tfrac{ n e^{ (n+1)T(z) }  }{  (1-T(z))^{n+2} }  G_n\left(   \tfrac{T(z)}{1-T(z)}   \right)
  + \tfrac{ e^{ (n+1)T(z) }  }{  (1-T(z))^{n+3} }  G_n'\left(   \tfrac{T(z)}{1-T(z)}   \right),
\end{align*}
\begin{align*}
\diff[n+1]{}{z} T_1(z) = \tfrac{ e^{ (n+1)T(z) }  }{  (1-T(z))^{n+1} }
                         \left( n \tfrac{2-T(z)}{1-T(z)} G_n\left(   \tfrac{T(z)}{1-T(z)}   \right)  
                         + \tfrac{1}{(1-T(z))^2}  G_n'\left(   \tfrac{T(z)}{1-T(z)} \right) \right).
\end{align*}
This equation shows that $G_{n+1}$ exists, with:
\begin{align*}
 G_{n+1}\left(   \tfrac{T(z)}{1-T(z)}   \right) = n \tfrac{2-T(z)}{1-T(z)} G_n\left(   \tfrac{T(z)}{1-T(z)}   \right)  +  \tfrac{1}{(1-T(z))^2}  G_n'\left(   \tfrac{T(z)}{1-T(z)} \right).
\end{align*}
This gives the recursion since we have $G_{n+1}$ in terms of $G_n$.

Similar computations give the result for $F_n$ and $H_n$.
\end{proof}

The first values of these polynomials are as follows:
\[ 
\begingroup
\everymath{\scriptstyle}
\begin{array}{ c|c|c }
n &  F_n(x) & G_n(x)  \\
\hline 
1 & 1 & 1  \\
2 & 3x+4 & x+2  \\
3 & 15x^2 + 40 x + 27 & 3x^2+10x+9  \\
4 & 105 x^3 + 420 x^2 + 565 x + 256 & 15x^3+70x^2+113x+64  \\
5 & 945 x^4  + 5040 x^3  + 10150 x^2  + 9156 x + 3125 & 105x^4+630x^3+1450x^2+1526x+625 \\
6 &  10395x^5+69300x^4+185850x^3+250768x^2+170359x+46656   & 945x^5+6930x^4+20650x^3+31346x^2+24337x+7776
\end{array} 
\endgroup
\]

\[ 
\begingroup
\everymath{\scriptstyle}
\begin{array}{ c|c }
n  & H_n(x) \\
\hline 
1 &  1 \\
2 &  1 \\
3 &  x+3 \\
4 &  3x^2+13x+16 \\
5 &  15x^3+85x^2+171x+125 \\
6 &  105x^4+735x^3+2005x^2+2551x+1296 \\
7 &  945x^5+7875x^4+26950x^3+47586x^2+43653x+16807
\end{array}
\endgroup
\]

The coefficients of $G_n(x)$ are Sloane's \href{http://oeis.org/A048160}{A048160}, and
those of $H_n(x)$ are \href{http://oeis.org/A048159}{A048159}.
These two triangle of integers were defined by Flight \cite{flight}. The row sums, i.e. the integers
$G_n(1)$, appear in Felsenstein's article \cite{felsenstein}.

We also have to examine some of the shifted polynomials:
\[ 
\begingroup
\everymath{\scriptstyle}
\begin{array}{ c|c|c }
n &  F_n(x-1) & G_n(x-1)  \\
\hline 
1 & 1 & 1  \\
2 & 3x+1 & x+1  \\
3 & 15x^2 + 10 x + 2 & 3x^2+4x+2  \\
4 & 105 x^3 + 105 x^2 + 40 x + 6 & 15x^3+25x^2+18x+6  \\
5 & 945 x^4  + 1260 x^3  + 700 x^2  + 196 x + 24 & 105x^4+210x^3+190x^2+96x+24 \\
6 & 10395x^5+17325x^4+12600x^3+5068x^2+1148x+120 & 945x^5+2205x^4+2380x^3+1526x^2+600x+120
\end{array}
\endgroup
\]

\[ 
\begingroup
\everymath{\scriptstyle}
\begin{array}{ c|c }
n  & H_n(x-1) \\
\hline 
1 &  1 \\
2 &  1 \\
3 &  x+2 \\
4 &  3x^2+7x+6 \\
5 &  15x^3+40x^2+46x+24 \\
6 &  105x^4+315x^3+430x^2+326x+120 \\
7 & 945x^5+3150x^4+4900x^3+4536x^2+2556x+720
\end{array}
\endgroup
\]

The coefficients of $F_n(x-1)$ are \href{http://oeis.org/A075856}{A075856} and appeared in Ramanujan's notebooks \cite{ramanujan}
(with a definition equivalent to the present one), and several other works \cite{gould,goulden}.

$G_n(x-1)$ appeared in \cite{shor} where it is shown that these polynomials 
count rooted Cayley trees according to the number or {\it improper edges} (see below),
the unrooted analog $H_n(x-1)$ appeared in Zeng's article \cite{zeng}.
The coefficients of these polynomial are respectively  \href{http://oeis.org/A054589}{A054589}
and  \href{http://oeis.org/A217922}{A217922}.

\section{Greg trees}

\begin{defi}[\cite{flight}]
Let $n\geq 0$.
A {\it Greg tree} of size $n$ is a tree such that:
\begin{itemize}
 \item there are $n$ vertices labeled by integers from $1$ to $n$, and the other
         vertices are unlabeled,
\item the unlabeled vertices have degree at least 3.
\end{itemize}
Let $\mathcal{G}_n$ denote the set of {\it Greg trees} with $n$ labeled vertices.
Let $\unl(T)$ denote the number of unlabeled vertices of some $T\in\mathcal{G}_n$.
Let $\mathcal{C}_n\subset\mathcal{G}_n$ denote the subset of Cayley trees, i.e. those $T$ with $\unl(T)=0$.
\end{defi}

Figure~\ref{greg3} shows the Greg trees of size 3.

Since we do not specify a bound on the number of unlabeled vertices, it is not {\it a priori} clear that there 
is a finite number of Greg trees of size $n$. But the condition that unlabeled vertices have degree at least 3 
gives such a bound. Indeed, let $u$ be the number of unlabeled vertices in a Greg tree of size $n$,
so $u+n$ is the total number of vertices and there are $u+n+1$ edges. 
The condition on the degree easily gives $2(u+n+1)\geq 3u$, hence $u\leq 2(n+1)$.

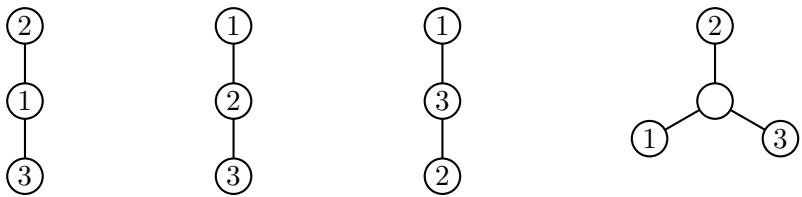
\begin{figure}[h!tp]
\begin{pspicture}(-0.25,-0.25)(0.25,2.25)
\psline(0,0)(0,2) 
\vertex{0}{2}{2}
\vertex{0}{1}{1}
\vertex{0}{0}{3}
\end{pspicture}
\hspace{2cm}
\begin{pspicture}(-0.25,-0.25)(0.25,2.25)
\psline(0,0)(0,2) 
\vertex{0}{2}{1}
\vertex{0}{1}{2}
\vertex{0}{0}{3}
\end{pspicture}
\hspace{2cm}
\begin{pspicture}(-0.25,-0.25)(0.25,2.25)
\psline(0,0)(0,2) 
\vertex{0}{2}{1}
\vertex{0}{1}{3}
\vertex{0}{0}{2}
\end{pspicture}
\hspace{2cm}
\begin{pspicture}(-1.1,-0.25)(1.1,2.25)
\psline(0,1)(0,2)
\psline(0,1)(-0.87,0.5)
\psline(0,1)(0.87,0.5)
\vertex{0}{2}{2}
\vertex{-0.87}{0.5}{1}
\vertex{0.87}{0.5}{3}
\vertex{0}{1}{}
\end{pspicture}
\caption{Greg trees of size 3. \label{greg3}}
\end{figure}

\begin{defi}
Let $\mathcal{G}^\bullet_n$
denote the set of {\it rooted Greg trees}, i.e. Greg trees of size $n$ with a distinguished vertex called the {\it root},
but with the additional rule that if the root is unlabeled it may have degree 2.

Let $\mathcal{C}^\bullet_n\subset\mathcal{G}^\bullet_n$ denote the subset of rooted Cayley trees.
\end{defi}

See Figure~\ref{rgreg3} for the rooted Greg trees of size 2. We represent the root as a vertex with a double line circle.
Note that since only the root is allowed to have degree 2, the same argument as in the unrooted case gives
a bound on the number of unlabeled vertices, so that there is a finite number of rooted Greg trees of size $n$.

\begin{figure}[h!tp]
\begin{pspicture}(0.25,0.25)(0.75,1.75)
 \psline(0.5,0.5)(0.5,1.5)
 \rvertex{0.5}{1.5}{1}
 \vertex{0.5}{0.5}{2}
\end{pspicture}
\hspace{2cm}
\begin{pspicture}(0.25,0.25)(0.75,1.75)
 \psline(0.5,0.5)(0.5,1.5)
 \vertex{0.5}{1.5}{1}
 \rvertex{0.5}{0.5}{2} 
\end{pspicture}
\hspace{2cm}
\begin{pspicture}(0.25,0)(0.75,1.5)
 \psline(0.5,0.5)(1.25,1)(2,0.5)
 \vertex{0.5}{0.5}{1}
 \vertex{2}{0.5}{2}
 \rvertex{1.25}{1}{}
\end{pspicture}
 \caption{Rooted Greg trees of size 2. \label{rgreg3}}
\end{figure}
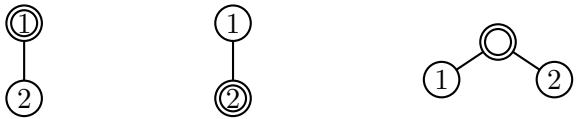

\begin{theo}
 For any $n\geq1$, we have:
\begin{equation}
 G_n(x) = \sum_{T\in \mathcal{G}^\bullet_n} x^{\unl(T)}, \qquad H_n(x) = \sum_{T\in \mathcal{G}_n} x^{\unl(T)}.
\end{equation}
\end{theo}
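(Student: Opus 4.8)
The plan is to prove the theorem by checking that the combinatorial polynomials $\widetilde G_n(x):=\sum_{T\in\mathcal{G}^\bullet_n}x^{\unl(T)}$ and $\widetilde H_n(x):=\sum_{T\in\mathcal{G}_n}x^{\unl(T)}$ satisfy the recursion \eqref{recGH} together with the initial condition $\widetilde G_1=\widetilde H_1=1$. Since \eqref{recGH} and the value at $n=1$ determine $G_n$ and $H_n$ uniquely, this forces $\widetilde G_n=G_n$ and $\widetilde H_n=H_n$. The initial step is immediate: a Greg tree of size $1$ has no unlabeled vertex (one would need three neighbours among at most two other vertices), so $\mathcal{G}_1$ and $\mathcal{G}^\bullet_1$ each consist of a single tree, with $\unl=0$.

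The heart of the proof is to show that every $T'\in\mathcal{G}^\bullet_{n+1}$ arises in exactly one way from a tree $T\in\mathcal{G}^\bullet_n$ by one of a short list of ``insertions'' of the vertex $v$ labelled $n+1$ (the inverse removes the label $n+1$: according to the local shape of $v$ one deletes it, suppresses it, turns it unlabeled, or promotes its child, and then contracts away an under-degree unlabeled vertex if one appears). Grouped by the shape of $v$ in $T'$, the types, with their contributions to $\sum_{T'}x^{\unl(T')}$, are: $v$ is a new pendant at one of the $n+\unl(T)$ vertices of $T$, contributing $nG_n(x)+xG'_n(x)$; $v$ is a pendant hanging from a new unlabeled vertex that subdivides one of the $n+\unl(T)-1$ edges of $T$, contributing $(n-1)xG_n(x)+x^2G'_n(x)$; $v$ hangs from a new unlabeled root of degree $2$ placed above the root of $T$, contributing $xG_n(x)$; $v$ itself subdivides an edge of $T$, contributing $(n-1)G_n(x)+xG'_n(x)$; $v$ replaces a non-root unlabeled vertex of $T$, contributing $G'_n(x)-\tfrac1x A_n(x)$, where $A_n(x)$ sums $x^{\unl(T)}$ over those $T\in\mathcal{G}^\bullet_n$ with an unlabeled root; $v$ is a new labelled root placed above $T$, contributing $G_n(x)$; and $v$ replaces the (unlabeled) root of $T$, contributing $\tfrac1x A_n(x)$. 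The two $A_n$-terms cancel, and the contributions total $(2n+nx)G_n(x)+(1+x)^2G'_n(x)=G_{n+1}(x)$. The unrooted identity is obtained from the same scheme after deleting the three root-specific types and every degree-$2$ exception, the remaining contributions summing to $(2n-1+(n-1)x)H_n(x)+(1+x)^2H'_n(x)=H_{n+1}(x)$.

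The main obstacle is making this case analysis watertight rather than any individual computation. One must check that the inverse map always returns a tree in $\mathcal{G}^\bullet_n$ (every unlabeled vertex keeping degree $\ge 3$, an unlabeled root degree $\ge 2$), that the seven insertion rules are pairwise disjoint and jointly cover all of $\mathcal{G}^\bullet_{n+1}$, and that each is a bijection onto its part -- in particular that subdividing an edge of $T$ never accidentally produces the root, and that adding or removing a root never breaks a degree constraint. The subtlest bookkeeping is that of the two $A_n$-terms, whose cancellation rests on counting the non-root unlabeled vertices of $T$ as $\unl(T)$ minus the indicator that the root of $T$ is unlabeled, and not conflating this with $\unl(T)$ or with $A_n'(x)$.
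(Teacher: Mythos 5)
Your proof is correct, but it takes precisely the route that the paper explicitly declines to follow. The paper notes that one can ``examine how to build a Greg tree of size $n+1$ from a Greg tree of size $n$, which leads precisely to the relations in \eqref{recGH} as was done in \cite{flight}'', and then does something else: it proves the theorem without the recursion, by introducing the restriction operation $X\mapsto X|_n$ from Cayley trees to Greg trees and showing (Proposition~\ref{propgen} and its rooted analogue, via the symbolic method) that the fibre of this map over a fixed $T\in\mathcal{G}_n$ has exponential generating function $\frac{e^{nT(z)}}{(1-T(z))^{n-1}}\bigl(\frac{T(z)}{1-T(z)}\bigr)^{\unl(T)}$; summing over $T$ and comparing with \eqref{defH} and \eqref{defG} identifies $H_n$ and $G_n$ with the Greg-tree generating polynomials directly. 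That argument has the advantage of explaining the \emph{shape} of the right-hand sides of \eqref{defG}--\eqref{defH} combinatorially (each factor corresponds to a piece of the decomposition), whereas your induction only certifies equality of two sequences satisfying the same recursion. On the other hand, your argument is self-contained and elementary, and your bookkeeping is right: the seven insertion types are classified by the local configuration of the vertex labelled $n+1$ (non-root leaf with neighbour labelled, or unlabeled of degree $\geq 4$, or unlabeled root of degree $\geq 3$; non-root leaf under a non-root unlabeled vertex of degree exactly $3$; non-root leaf under an unlabeled root of degree exactly $2$; non-root of degree $2$; non-root of degree $\geq 3$; root of degree $1$; root of degree $\geq 2$), which is indeed a disjoint and exhaustive list, and the contributions sum to $(2n+nx)G_n+(1+x)^2G_n'$ with the two $A_n$ terms cancelling as you say. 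The unrooted tally $(2n-1+(n-1)x)H_n+(1+x)^2H_n'$ also checks out. So the proposal stands as a valid alternative proof, essentially Flight's original recursive argument, rather than a reconstruction of the paper's.
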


It possible to prove this theorem by examining how to build a Greg tree of size $n+1$ from a Greg tree of size
$n$, which leads precisely to the relations in \eqref{recGH} as was done in \cite{flight}. But we take a different
point of view here. Rather than the recursion in \eqref{recGH}, we show that the structure of the right-hand
sides of \eqref{defG} and \eqref{defH} is explained through some combinatorial operations on Greg trees.

\begin{defi}
Let $T\in\mathcal{C}_m$ and $n<m$. We define a Greg tree $ T|_n \in \mathcal{G}_{n}$ as the result of the
following process:
\begin{list}{\labelitemi}{\leftmargin=1em}
 \item Remove all labels greater than $n$ (thus creating new unlabeled vertices).
 \item Remove each unlabeled vertex of degree 2, and join the two pending edges into a single edge.
 \item Remove each unlabeled vertex of degree 1, as well as the incident edge.
 \item Repeat the last two steps, until the result is a Greg tree.
\end{list}
\end{defi}

This process is illustrated in Figure~\ref{restr}.

\begin{figure}[h!tp] 
\begin{pspicture}(3.1,2.5)
\psline(1,1)(0.4,0.3)
\psline(1,1)(0.3,1.1)
\psline(1,1)(1,1.8)
\psline(1,1.8)(1.6,2.2) 
\psline(1.6,2.2)(2.2,1.8)
\psline(1,1)(1.4,0.4)
\psline(1.4,0.4)(2.1,0.4)
\psline(2.1,0.4)(2.3,1)
\psline(2.1,0.4)(2.9,0.4)
\vertex{0.4}{0.3}{1}
\vertex{1}{1.8}{2}
\vertex{2.9}{0.4}{3}
\vertex{1.6}{2.2}{4}
\vertex{1.4}{0.4}{5}
\vertex{0.3}{1.1}{6}
\vertex{1}{1}{7}
\vertex{2.1}{0.4}{8}
\vertex{2.2}{1.8}{9}
\vertex{2.3}{1}{10}
\end{pspicture}
\hspace{1cm}
\begin{pspicture}(2,2) 
\psline(1,1)(0.4,0.3)
\psline(1,1)(1,1.8)
\psline(1,1.8)(1.6,2.2) 
\psline(1,1)(1.9,0.4)
\vertex{0.4}{0.3}{1}
\vertex{1}{1.8}{2}
\vertex{1.9}{0.4}{3}
\vertex{1.6}{2.2}{4}
\vertex{1}{1}{}
\end{pspicture}
 \caption{ The operation $T\mapsto T|_n$ (here $n=4$). \label{restr}}
\end{figure}
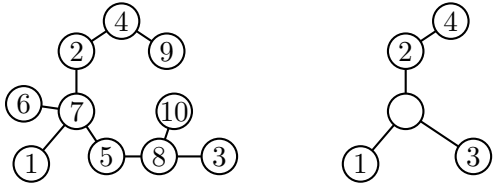

\begin{prop} \label{propgen}
Let $T\in\mathcal{G}_n$. We have:
\begin{equation}
 \sum_{m \geq n} \sum_{\substack{ X \in \mathcal{C}_m \\  X|_n = T    } } \frac{z^{m-n}}{(m-n)!} = 
 \frac{e^{nT(z)}}{(1-T(z))^{n-1}} \bigg( \frac{T(z)}{1-T(z)} \bigg)^{\unl(T)}.
\end{equation}
\end{prop}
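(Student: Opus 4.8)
The plan is to reconstruct, in an enumerative way, all Cayley trees $X \in \mathcal{C}_m$ that restrict to a given Greg tree $T \in \mathcal{G}_n$. The operation $X \mapsto X|_n$ erases the labels $>n$ and then prunes; to invert it we must describe what data we may freely add to $T$ to build such an $X$. First I would observe that the underlying combinatorial structure of $X$ is obtained from $T$ by: (a) assigning to each unlabeled vertex $v$ of $T$ a rooted Cayley structure on the ``extra'' vertices that collapsed onto $v$ — more precisely $v$ becomes a vertex of $X$ carrying some of the labels in $\{n+1,\dots,m\}$, and the unlabeledness of $v$ in $T$ means exactly that in $X$ at least one such extra vertex is present at that spot; (b) subdividing edges of $T$ and hanging rooted Cayley trees along them; and (c) hanging rooted Cayley trees at labeled vertices of $T$. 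The key point is that each of the $n$ ``attachment regions'' coming from the $n$ labeled vertices of $T$, together with the edges, contributes the same type of generating function, while each unlabeled vertex contributes an extra factor because it must be \emph{nonempty}.

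More concretely, I would use the exponential formula. Recall $T_1(z) = z e^{T_1(z)}$ enumerates rooted Cayley trees, and one has the standard fact that ``a set of rooted Cayley trees'' has EGF $e^{T(z)}$. The reconstruction of $X$ from $T$ amounts to: at each of the $n$ labeled vertices, attach an arbitrary (possibly empty) set of rooted Cayley trees on new labels — giving a factor $e^{T(z)}$ per labeled vertex, hence $e^{nT(z)}$ overall; along each of the $n-1$ edges of $T$, insert a sequence (possibly empty) of new internal vertices, each of which is the root of an arbitrary set of rooted Cayley trees and sits on the subdivided edge — a ``chain'' construction whose EGF is the geometric series $\frac{1}{1-T(z)}$, giving $\frac{1}{(1-T(z))^{n-1}}$; and at each of the $\unl(T)$ unlabeled vertices of $T$, the same kind of region must be present but \emph{nonempty} (the vertex survives in $T$ only because something collapsed there, and it had degree $\geq 3$), contributing a factor $\frac{T(z)}{1-T(z)}$ rather than $\frac{1}{1-T(z)}$, i.e. a factor $\frac{T(z)}{1-T(z)}$ for each of the $\unl(T)$ such vertices relative to the edge-count normalization. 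Collecting, one gets exactly $\frac{e^{nT(z)}}{(1-T(z))^{n-1}}\big(\frac{T(z)}{1-T(z)}\big)^{\unl(T)}$, and the division by $(m-n)!$ is automatic since we are working with labeled structures on the label set $\{n+1,\dots,m\}$ and using EGFs throughout.

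The step I expect to be the main obstacle is making precise the decomposition into independent ``regions'' and verifying it is a bijection — in particular getting the bookkeeping right so that the degree condition on unlabeled vertices of $T$ translates exactly into the nonemptiness that produces the factor $\frac{T(z)}{1-T(z)}$, and checking that no overcounting occurs at the points where subdivided edges meet vertices. One has to be careful that an unlabeled vertex $v$ of $T$ of degree $d\geq 3$ arises from a vertex of $X$ which, after deleting its extra-labeled neighborhood, must retain $\geq 3$ edges into the rest of $T$; the combinatorial claim is that the set of all rooted-Cayley-tree data that may be grafted at $v$ (on the collapsed extra labels) is precisely an arbitrary \emph{nonempty} set of rooted Cayley trees, with EGF $e^{T(z)}-1$ — and then one reconciles $e^{T(z)}-1$ with the stated factor using the identities $T_0(z)=\frac{1}{1-T(z)}$ and $e^{T(z)}=\frac{T(z)}{z}$-type relations, or more directly by folding the ``$-1$'' into the chain generating function so that an occupied internal site contributes $\frac{T(z)}{1-T(z)}$. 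Once the bijection is set up cleanly, the generating-function identity is a routine application of the product rule for EGFs of labeled structures, and summing over $m$ introduces the $\frac{z^{m-n}}{(m-n)!}$ weights by definition of the EGF in the variable counting the new labels.
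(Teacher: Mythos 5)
Your overall strategy coincides with the paper's: decompose a Cayley tree $X$ with $X|_n=T$ into independent labeled structures attached to the vertices and edges of $T$, and read off the generating function by the symbolic method. The factors you assign to labeled vertices (a possibly empty set of rooted trees, $e^{T(z)}$ each) and to a subdivided edge (a possibly empty sequence of rooted trees, $\frac{1}{1-T(z)}$ each) are correct and match the paper. But the argument breaks exactly at the point you flag as the main obstacle, in two concrete ways. First, $T\in\mathcal{G}_n$ has $n+\unl(T)$ vertices and hence $n-1+\unl(T)$ edges, not $n-1$; every one of these edges carries a possibly empty chain, so the edges alone already contribute $(1-T(z))^{-(n-1+\unl(T))}$. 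Second, the structure sitting at an unlabeled vertex $v$ is neither ``an arbitrary nonempty set of rooted Cayley trees'' with EGF $e^{T(z)}-1$ nor a nonempty chain: $v$ is \emph{replaced} in $X$ by a single vertex $w$ with label $>n$, to which all $\deg(v)\geq 3$ edges of $T$ attach, together with all branches of $X$ at $w$ that get pruned away; this is exactly one rooted Cayley tree, with EGF $T(z)$. Your two candidate answers are mutually inconsistent and neither equals $T(z)$: already on the label set $\{1,2\}$ one counts $2$ rooted trees, $3$ nonempty sets of rooted trees, and $4$ nonempty sequences, so no identity can ``reconcile'' $e^{T(z)}-1$ with $\frac{T(z)}{1-T(z)}$. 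Moreover, grafting a nonempty set or chain ``at the spot'' of $v$ does not even yield a well-defined tree, since one must then decide where the $\deg(v)$ edges of $T$ attach, and different choices give different $X$.

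With the two corrections the product is $e^{nT(z)}\,(1-T(z))^{-(n-1+\unl(T))}\,T(z)^{\unl(T)}$, which equals the stated right-hand side; this is precisely the paper's bookkeeping. Your normalization (only $n-1$ factors $\frac{1}{1-T(z)}$ plus a factor $\frac{T(z)}{1-T(z)}$ per unlabeled vertex) could be made rigorous, but only by canonically designating one incident edge per unlabeled vertex and bundling its chain with the single replacement tree --- a rooted tree followed by a possibly empty sequence is a nonempty sequence whose first term is distinguished. As written, the data you attach at unlabeled vertices does not biject with the fibers of $X\mapsto X|_n$, so the proof has a genuine gap at its central step, even though the final formula you assemble is correct.
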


\begin{proof}
In this proof, we make an extensive use of the symbolic method for labeled combinatorial objects \cite{flajolet}, 
which permits to obtain the result in a purely combinatorial manner.
The idea is to find a canonical decomposition for a Cayley tree $X \in \mathcal{C}_m$ satisfying $X|_n=T$, as a 
collection of rooted Cayley trees indexed in a particular way. Note that this tree $X$ is considered as 
having size $m-n$, i.e. only the labels greater than $n$ are part of the labeled combinatorial structure.
We rewrite the right hand side of the equation as:
\begin{equation} \label{rewr}
  e^{nT(z)}   \bigg( \frac{1}{1-T(z)} \bigg)^{n-1+\unl(T)}  T(z)^{\unl(T)}.
\end{equation}
Then the construction is as follows:
\begin{itemize}
 \item To each of the $n$ labeled vertices of $T$, we associate an unordered collection of rooted trees,
       each giving a factor $e^{T(z)}$.
 \item To each of the $n-1+\unl(T)$ edges of $T$, we associate an ordered collection of rooted trees, 
       each giving a factor $\frac{1}{1-T(z)}$.
 \item To each of the $\unl(T)$ unlabeled vertices of $T$, we associate a rooted tree,
       each giving a factor $T(z)$.
\end{itemize}
Here, we understand that all these trees are part of the same combinatorial structure, i.e. each integer
between $n+1$ and $m$ appears exactly once as the label of a vertex of some tree.
We have thus defined a combinatorial class whose generating function is clearly 
given by Equation~\eqref{rewr}, and it remains to check that it maps bijectively to
the set of $X$ such that $X|_n=T$.
We can build such a Cayley tree $X$ satisfying $X|_n=T$ from a collection of trees as above in a straightforward way:
\begin{itemize}
 \item An unordered collection $C$ of rooted trees associated to a labeled vertex $v$ is attached to $T$
       by adding an edge from $v$ to the root of each tree in $C$.
 \item An ordered collection $C$ of rooted trees associated to an edge $e$ (say, from vertices $v_1$ to $v_2$) 
       is attached to $T$ by: removing $e$, adding an edge from $v_1$ to the root of the first tree in $C$, an
       edge from the root of a tree in $C$ to the root of the next tree, and an edge from the root 
       of the last tree in $C$ to $v_2$.
 \item A rooted tree associated with an unlabeled vertex of $T$ is attached to $T$ by putting its
       root in place of the unlabeled vertex.
\end{itemize}
The fact that we have a bijection can be proved formally by an induction on $m$, for example. We omit details.
\end{proof}

As a consequence of the previous proposition, we have:
\begin{align*}
  \diff[n]{}{z} T_2(z) & = \diff[n]{}{z} \sum_{m\geq0} \sum_{T\in\mathcal{C}_m} \frac{z^m}{m!}
                         = \sum_{ m \geq n } \sum_{T \in \mathcal{C}_m } \frac{z^{m-n}}{(m-n)!} 
                         = \sum_{U \in\mathcal{G}_n }  \sum_{m \geq n} \sum_{\substack{ T \in \mathcal{C}_m \\  T|_n = U    } } \frac{z^m}{(m-n)!} \\
                       & = \sum_{U \in\mathcal{G}_n }  \frac{e^{nT(z)}}{(1-T(z))^{n-1}} \bigg( \frac{T(z)}{1-T(z)} \bigg)^{\unl(T)}
                         = \frac{e^{nT(z)}}{(1-T(z))^{n-1}} H_n \bigg( \frac{T(z)}{1-T(z)} \bigg).
\end{align*}
So the combinatorics of Greg trees proves the formula in Equation~\eqref{defH}. Note in particular that
we do not use the recursion for the polynomial $H_n(x)$ or for the construction of Greg trees.

As for the rooted case, the operation $T\mapsto T|_n$ can be defined in a similar way. The only difference is that 
an unlabeled vertex with degree 2 is removed only if it is not the root.
Similarly, we have:

\begin{prop}
Let $T\in\mathcal{G}^\bullet_n$. We have:
\begin{equation}
 \sum_{m \geq n} \sum_{\substack{ X \in \mathcal{C}^\bullet_m \\  X|_n = T    } } \frac{z^{m-n}}{(m-n)!} = 
 \frac{e^{nT(z)}}{(1-T(z))^n} \bigg( \frac{T(z)}{1-T(z)} \bigg)^{\unl(T)}.
\end{equation}
\end{prop}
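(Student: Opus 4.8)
The plan is to follow the proof of Proposition~\ref{propgen} almost verbatim, via the symbolic method for labeled structures, the only new ingredient being an additional factor attached at the root. First I would rewrite the right-hand side as
\[
  e^{nT(z)}\Bigl(\tfrac{1}{1-T(z)}\Bigr)^{n+\unl(T)}T(z)^{\unl(T)}
\]
and note that $T\in\mathcal G^\bullet_n$ has $n$ labeled vertices, $\unl(T)$ unlabeled vertices and $n-1+\unl(T)$ edges, so that the exponent $n+\unl(T)$ of $\frac1{1-T(z)}$ is exactly one more than the number of edges. This dictates the combinatorial class to build: an unordered collection of rooted Cayley trees at each labeled vertex of $T$ (factor $e^{T(z)}$), a single rooted Cayley tree at each unlabeled vertex of $T$ (factor $T(z)$), an ordered collection of rooted Cayley trees interpolated along each edge of $T$ (factor $\frac1{1-T(z)}$), and — the new piece — one further ordered collection $(R_1,\dots,R_k)$ of rooted Cayley trees attached above the root $r$, glued so that the root $\rho_1$ of $R_1$ becomes the root of the resulting Cayley tree $X$ and $\rho_1-\rho_2-\cdots-\rho_k-r$ is a path in $X$ (with $X$ rooted at $r$ when $k=0$), where $\rho_i$ is the root of $R_i$; when $r$ is an unlabeled vertex of $T$, ``$r$'' here means the root of the single rooted tree occupying its place. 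The generating function of this class is the displayed expression, with $z$ marking the vertices whose labels lie in $\{n+1,\dots,m\}$.

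Next I would verify that this gluing is a bijection onto the set of all $X\in\mathcal C^\bullet_m$, $m\ge n$, with $X|_n=T$. That the reconstructed $X$ satisfies $X|_n=T$ follows by running the pruning procedure: every rooted Cayley tree used in the construction has all its labels $>n$ and is therefore erased; each interpolating vertex along an edge survives briefly as an unlabeled vertex of degree $2$ and is then suppressed; the root of the single tree placed at an unlabeled vertex $w$ retains degree $\ge3$ (or $\ge2$ if $w=r$) and so is kept; and the root of $X$ migrates down the path $\rho_1-\cdots-\rho_k-r$, each $\rho_i$ becoming in turn an unlabeled vertex of degree $1$ that is deleted although it is momentarily the root, until the root settles at $r$. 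Conversely, given $X$ with $X|_n=T$, I would read off $(R_1,\dots,R_k)$ by following the path in $X$ from its root to the vertex representing $r$ and collecting the subtrees hanging off that path, and read off the remaining collections from the pruned subtrees exactly as in Proposition~\ref{propgen}; here one uses that the interior vertices of that path and the subtrees branching off them carry only labels $>n$, since otherwise they would survive in $X|_n$ and prevent the root of $X|_n$ from reaching $r$.

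The step I expect to be the main obstacle is, just as in the unrooted case, the formal check that the two maps are mutually inverse; I would do this by induction on $m$, deleting from $X$ the leaf with the largest label and tracking how the decomposition changes, with special attention to the behaviour at $r$ — in particular the interaction, when $r$ is unlabeled, between the single rooted tree occupying the place of $r$ and the collection $(R_1,\dots,R_k)$ stacked above it. A small but essential preliminary is to fix the convention for $X\mapsto X|_n$ in the rooted setting: besides the stated rule that a degree-$2$ unlabeled root is retained, one must stipulate that a degree-$1$ unlabeled root is removed, its mark passing to its neighbour, which is precisely what lets the root travel from $\rho_1$ down to $r$ above (and is already forced by the case $n=1$, where $X|_1$ must equal the one-vertex tree rooted at $1$ for every rooted Cayley tree $X$).
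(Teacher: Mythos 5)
Your proof is correct and takes exactly the route the paper intends: the paper proves the unrooted case (Proposition~\ref{propgen}) by the symbolic-method decomposition and dismisses the rooted case with ``similarly,'' and you supply precisely the one genuinely new ingredient, namely the extra ordered collection of rooted trees stacked above the root, which accounts for the additional factor $\frac{1}{1-T(z)}$ (the exponent $n+\unl(T)$ being one more than the number $n-1+\unl(T)$ of edges). Your observation that one must fix the convention that a degree-$1$ unlabeled root is deleted with the root mark passing to its neighbour is a necessary and correctly identified clarification (forced, as you note, by the $n=1$ case), and the remaining verification by induction on $m$ is at the same level of detail as the paper's own omitted check.
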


And we can deduce Equation~\eqref{defG} in the same way as we obtained Equation~\eqref{defH}.

For the sake of completeness, let us mention that we could have considered a slightly more general definition 
for rooted Greg trees, by allowing an unlabeled root to have degree 1 or 2. Then the generating function in 
size $n$ is the polynomial $(1+x)G_n(x)$, and this can be easily proved bijectively (the term $xG_n(x)$ gives
the generating function for the case where the root is unlabeled and has degree 1).
The formulas for $G_n(x)$ can be modified accordingly. In particular, the previous proposition would be 
perhaps more natural with this alternative definition, as there would be $(1-T(z))^{n-1}$ in the denominator
just as in Proposition~\ref{propgen} instead of $(1-T(z))^{n}$.

The reason of our choice is that the present definition 
of Greg trees agrees with \cite{flight}, and considering $G_n(x)$ rather than $(1+x)G_n(x)$ makes a better
connection with previous works mentionned in the introduction.
It would also be possible to define bi-rooted Greg trees, where we have two (ordered) roots allowed to have 
degree 1 or 2, and the generating function of these is $\sum x^{\unl(T)}= (1+x)^3F_n(x)$. The proof is along the
same line: they appear by the operation $T\mapsto T|_n$ defined on bi-rooted Cayley trees (whose exponential 
generating function is $T_0(z)$). 

\section{Generating functions}

It is possible to write down formulas for the generating functions of the polynomials $F_n(x)$, $G_n(x)$
and $H_n(x)$. These essentially follow from Equations~\eqref{defF}, \eqref{defG}, \eqref{defH}
and Taylor series expansions.

\begin{lemm}
The compositional inverse of $\frac{T(z)}{1-T(z)}$ is
$
  \frac{z}{1+z} \exp\big( - \frac{z}{1+z} \big)
$.
\end{lemm}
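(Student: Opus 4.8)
The plan is to start from the defining series $T(z) = \sum_{n\ge1} n^{n-1} z^n/n!$, which satisfies $T(z) = z e^{T(z)}$, and to compute directly the compositional inverse of the function $u(z) = \frac{T(z)}{1-T(z)}$. First I would note that $u$ is a well-defined formal power series in $z$ with $u(z) = z + O(z^2)$, since $T(z) = z + O(z^2)$; hence it has a compositional inverse. To find it, I would solve for $z$ in terms of $w := u(z) = \frac{T}{1-T}$. From this relation one gets $T = \frac{w}{1+w}$, so it remains to express $z$ in terms of $T$. But the functional equation $T = z e^{T}$ inverts immediately: $z = T e^{-T}$. Substituting $T = \frac{w}{1+w}$ yields
\[
  z = \frac{w}{1+w}\,\exp\!\Big(\!-\frac{w}{1+w}\Big),
\]
which is exactly the claimed inverse (after renaming $w$ back to $z$).

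The key steps, in order, are: (1) record that $u(z)=\frac{T(z)}{1-T(z)}$ is a formal power series with nonzero linear coefficient, so a compositional inverse exists and is unique; (2) set $w = \frac{T}{1-T}$ and invert this elementary Möbius-type relation to get $T = \frac{w}{1+w}$; (3) use the defining equation $T = z e^{T}$ in the form $z = T e^{-T}$ to solve for $z$; (4) substitute to obtain $z = \frac{w}{1+w}\exp(-\frac{w}{1+w})$, and observe that composing $u$ with this series returns the identity, since each substitution step is an equivalence of formal identities near $0$.

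There is essentially no hard part here: the only thing to be slightly careful about is the bookkeeping of which relation is being inverted and in what ring (formal power series versus germs of holomorphic functions). One should check that $\frac{w}{1+w}\exp(-\frac{w}{1+w})$ indeed has the form $w + O(w^2)$, so that it is a genuine compositional inverse rather than some other functional inverse; this is immediate since $\frac{w}{1+w} = w + O(w^2)$ and $e^{-w+O(w^2)} = 1 + O(w)$. If one prefers an analytic statement, the same computation is valid on a neighborhood of $0$ because $T(z)$ is holomorphic there with $T(0)=0$, $T'(0)=1$, and $z\mapsto \frac{T(z)}{1-T(z)}$ is therefore a local biholomorphism fixing $0$.
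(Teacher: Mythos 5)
Your proof is correct and follows essentially the same route as the paper: both arguments view $\frac{T(z)}{1-T(z)}$ as the M\"obius map $z\mapsto\frac{z}{1-z}$ composed with $T$, and invert each piece using $z=Te^{-T}$ (equivalent to $T=ze^{T}$) and $w\mapsto\frac{w}{1+w}$. Your version merely writes out the substitutions explicitly and adds the (harmless, and welcome) sanity check that the candidate inverse has the form $w+O(w^2)$.
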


\begin{proof}
We see $\frac{T(z)}{1-T(z)}$ as the composition of $T(z)$ and $\frac{z}{1-z}$.
The compositional inverse of these two functions are respectively $ze^{-z}$ and
$\frac{z}{1+z}$. The result follows.
\end{proof}

Let us define $F_0(x)=\frac{1}{1+x}$, $G_0(x) = \frac{x}{1+x}$, and $H_0(x) = \frac{x(x+2)}{2(x+1)}$. 
Although these are not polynomials, we can check that Equations~\eqref{defF}, \eqref{defG}, \eqref{defH}, 
and the recursions in Equation~\eqref{recGH} are true for $n=0$, so these definitions are rather natural. 
In particular, the generating functions have a nice form as follows:

\begin{theo}
\begin{align*}
  \sum_{n\geq 0 } \frac{u^n}{n!} F_n(x) &= \frac{1}{(1+x)^2} T_0 \left( \frac{u+x}{1+x}e^{-\frac{x}{1+x}} \right), \\
  \sum_{n\geq0} \frac{u^n}{n!} G_n(x)   &= T\left( \frac{u+x}{1+x}e^{-\frac{x}{1+x}} \right), \\
  \sum_{n\geq0} \frac{u^n}{n!} H_n(x)   &= (1+x) T_2 \left( \frac{u+x}{1+x}e^{-\frac{x}{1+x}} \right).
\end{align*}
\end{theo}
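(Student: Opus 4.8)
The plan is to derive each generating function identity by combining the defining differential relation from Equations~\eqref{defF}, \eqref{defG}, \eqref{defH} with a change of variables governed by the Lemma that identifies the compositional inverse of $\tfrac{T(z)}{1-T(z)}$. I will treat the case of $G_n(x)$ in detail, since the other two run along identical lines. The starting observation is that Equation~\eqref{defG} can be read as a Taylor coefficient statement: if we set $g(z) = \tfrac{T(z)}{1-T(z)}$, then
\[
  \diff[n]{}{z} T_1(z) = \frac{e^{nT(z)}}{(1-T(z))^n}\, G_n(g(z)).
\]
I will evaluate the left side at $z=0$ to relate it to $G_n$ as a function of $x$ evaluated at a suitable point, but the cleaner route is to recognize that the whole family $\{G_n\}$ is encoded by a single bivariate Taylor expansion of $T_1$ about a shifted base point.

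First I would introduce the substitution $x = g(z_0) = \tfrac{T(z_0)}{1-T(z_0)}$, so that, by the Lemma, $z_0 = \tfrac{x}{1+x}e^{-x/(1+x)}$. Next I would Taylor-expand $T_1(z_0 + t)$ in powers of $t$: by Taylor's formula,
\[
  T_1(z_0 + t) = \sum_{n\geq 0} \frac{t^n}{n!}\, \diff[n]{}{z} T_1(z)\Big|_{z=z_0}
  = \sum_{n\geq 0} \frac{t^n}{n!}\, \frac{e^{nT(z_0)}}{(1-T(z_0))^n}\, G_n(x),
\]
using Equation~\eqref{defG} at $z=z_0$ (and the $n=0$ convention $G_0(x)=\tfrac{x}{1+x}$, which the paper notes makes \eqref{defG} valid at $n=0$ since $T_1(z_0) = \tfrac{x}{1+x}$ is exactly $g(z_0)/(1+g(z_0))\cdot(1+\cdots)$— more simply, $T(z_0)=\tfrac{x}{1+x}$). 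Now the key algebraic simplification: from $x = \tfrac{T(z_0)}{1-T(z_0)}$ we get $T(z_0) = \tfrac{x}{1+x}$ and $1 - T(z_0) = \tfrac{1}{1+x}$, hence $\tfrac{e^{T(z_0)}}{1-T(z_0)} = (1+x)e^{x/(1+x)}$. Therefore the $t^n/n!$ coefficient above is $\big((1+x)e^{x/(1+x)}\big)^n G_n(x)$, so if I set $u = t\,(1+x)e^{x/(1+x)}$, i.e. $t = \tfrac{u}{1+x}e^{-x/(1+x)}$, I obtain
\[
  \sum_{n\geq 0} \frac{u^n}{n!} G_n(x) = T_1\!\left( z_0 + \frac{u}{1+x}e^{-x/(1+x)} \right)
  = T\!\left( \frac{x}{1+x}e^{-x/(1+x)} + \frac{u}{1+x}e^{-x/(1+x)} \right)
  = T\!\left( \frac{u+x}{1+x}e^{-x/(1+x)} \right),
\]
which is the claimed formula for $G_n$.

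For $H_n$ I would repeat the argument using Equation~\eqref{defH}; the extra factor $\tfrac{e^{T(z_0)}}{1-T(z_0)}$ is raised to the power $n-1$ rather than $n$, which produces exactly one stray factor of $(1+x)e^{x/(1+x)}$ that, after the same substitution $u = t(1+x)e^{x/(1+x)}$, comes out front as the prefactor $(1+x)$, giving $\sum_n \tfrac{u^n}{n!}H_n(x) = (1+x)\,T_2\big(\tfrac{u+x}{1+x}e^{-x/(1+x)}\big)$. For $F_n$, Equation~\eqref{defF} carries $(1-T(z_0))^{n+2}$ in the denominator, i.e. two fewer powers of $(1-T(z_0))$ than the "balanced" count; tracking this yields the prefactor $(1-T(z_0))^2 = \tfrac{1}{(1+x)^2}$, hence $\sum_n \tfrac{u^n}{n!}F_n(x) = \tfrac{1}{(1+x)^2}\,T_0\big(\tfrac{u+x}{1+x}e^{-x/(1+x)}\big)$. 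In each case one must also check the $n=0$ term matches the stated non-polynomial values $F_0,G_0,H_0$, which is immediate from $T(z_0)=\tfrac{x}{1+x}$, $T_0(z_0)=\tfrac{1}{1-T(z_0)}=1+x$, and $T_2(z_0)=T(z_0)-\tfrac12 T(z_0)^2 = \tfrac{x(x+2)}{2(1+x)^2}$.

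The only genuine subtlety — and the step I expect to be the main obstacle — is justifying convergence and the validity of the Taylor expansion: I am expanding $T_1$ about the interior point $z_0 = \tfrac{x}{1+x}e^{-x/(1+x)}$, so I need $z_0$ to lie in the domain of analyticity of $T_1$ (i.e. $z_0 \notin (-\infty,-\tfrac1e]$) and I need the identity to hold at the level of formal power series in $u$, which is really all that is being asserted. Treating $x$ as a formal parameter, $z_0 = \tfrac{x}{1+x}e^{-x/(1+x)}$ is a well-defined formal power series in $x$ with zero constant term, and by the Lemma the composition $g(z_0) = x$ holds as formal power series; then the whole computation is an identity of formal power series in the two variables $x$ and $u$, with no analytic issues. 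I would phrase the proof in this formal manner, remarking that it specializes to an analytic identity wherever the series converge. This disposes of the obstacle cleanly, and the remaining work — the bookkeeping of which power of $(1+x)e^{x/(1+x)}$ survives in each of the three cases — is the routine calculation sketched above.
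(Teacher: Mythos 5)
Your proposal is correct and follows essentially the same route as the paper: a Taylor expansion of $T_\alpha(z_0+t)$ about the base point $z_0$ determined by $\tfrac{T(z_0)}{1-T(z_0)}=x$ (via the Lemma on the compositional inverse), followed by the substitution $u = t\,(1+x)e^{x/(1+x)}$, which is exactly the paper's substitution $u=\tfrac{e^{T(z)}y}{1-T(z)}$ together with its observation that $xy=zu$. Your extra bookkeeping of the $n=0$ terms and the remark on interpreting everything as formal power series are harmless additions, not a different argument.
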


\begin{proof}
Let us begin with the case of $G_n(x)$.
By a Taylor series expansion and the definition of $G_n(x)$ in Equation~\eqref{defG}, we have:
\[
  T(y+z) = \sum_{n\geq0} \frac{y^n}{n!} \diff[n]{}{z} T(z)
         = \sum_{n\geq0} \frac{1}{n!} \left( \frac{e^{T(z)}y}{1-T(z)}  \right)^n G_n\left( \frac{T(z)}{1-T(z)}\right).
\]
Therefore, to get the result we can make the substitions:
\[
  u= \frac{e^{T(z)}y}{1-T(z)}, \qquad x=\frac{T(z)}{1-T(z)}.
\]
From the lemma above, we obtain that $z= \frac{x}{1+x}e^{-\frac{x}{1+x}}$.
Since $T(z)=ze^{T(z)}$, we can check that $xy=zu$. So:
\[
  y+z = \left(\tfrac ux +1\right) z =  \frac{u+x}{1+x}e^{-\frac{x}{1+x}}
\]
and the result follows. The other two generating functions are obtained by expanding
$T_0(y+z)$ and $T_2(y+z)$ and making the same substitution.
\end{proof}

Note that we have proved in particular that
\[
  H(x,u) = (1+x) \left( G(x,u) - \tfrac 12 G(x,u)^2 \right)
\]
where $G(x,u)=\sum_{n\geq 0}\frac{u^n}{n!} G_n(x)$ and $H(x,u)=\sum_{n\geq 0}\frac{u^n}{n!} H_n(x)$.
To understand this bijectively, it is better to use the generating functions without constant terms:
\[
  \tilde G = \sum_{n\geq 1} \frac{u^n}{n!} G_n(x) , \qquad \tilde H = \sum_{n\geq 1} \frac{u^n}{n!} H_n(x) .
\]
Then the relation becomes
$  \tilde H = \tilde G - \frac{1+x}{2} \tilde G ^ 2$,
which we can rewrite $ \tilde G = \tilde H + \frac{1+x}{2} \tilde G ^ 2 $.
Then we distinguish three cases for a rooted Greg tree:
\begin{itemize}
 \item the root is the vertex with label 1,
 \item the root is unlabeled and has degree 2,
 \item the other cases.
\end{itemize}
Clearly, the first two cases give the generating functions $\tilde H$ and $\frac{x}{2} \tilde G ^2$.
So it remains to check that the other cases give the generating function $\frac{1}{2} \tilde G ^2$.
So let $T\in\mathcal{G}^{\bullet}_n$ be among these other cases. Since the vertex with label 1 is not
the root, we can define an edge $e$ as the first one in the shortest path from the root to the vertex
with label 1. Then, remove this edge and say that its two endpoints are the roots of the two subtrees
thus created. Since the root is not an unlabeled vertex with degree 2, these two subtrees are indeed  
Greg trees. In this way we obtain the generating function $\frac{1}{2} \tilde G ^2$.

In the other direction, we can have $G_n$ in terms of $H_n$. Indeed 
we have the relation:
\begin{equation}
  G_n(x) = (n+(n-1)x)H_n(x) + (x+x^2)H'_n(x).
\end{equation}

This also can be proved combinatorially.

We distinguish the following cases for a rooted Greg tree:
\begin{itemize}
 \item The root is a labeled vertex.
 \item The root is an unlabeled vertex with degree at least 3.
 \item The root is an unlabeled vertex with degree equal to 2.
\end{itemize}
The first two cases give the generating functions $nH_n(x)$ and $xH_n'(x)$. It remains to show that the 
last case give the generating function $(n-1)xH_n(x)+x^2H_n'(x)$. This can be interpreted by the fact 
that the generating function for Greg trees with a distinguished edge (with no restriction) is 
$(n-1)H_n(x)+xH_n'(x)$. We leave the details as an exercise.

\section{The W function has the Bernstein property}

We refer to the book \cite{schilling} for the theory of completely monotonic functions and Bernstein functions.
In particular, we do not present here the motivation for studying these kind of functions. Let us just mention
that Bernstein functions appear in measure theory, because they are related with convolution semigroups of 
probability laws over positive reals. See \cite{pakes} for some probabilistic aspects of the function $W(z)$
related with the Bernstein property.

\setlength{\itemindent}{10mm}
\begin{defi} Let $f : (0,\infty) \to  \mathbb{R}$ of class $C^{\infty}$. We say that
\begin{list}{\labelitemi}{\leftmargin=1em}
 \item $f$ is {\it completely monotonic} if for all $z>0$ and $n\geq0$, $(-1)^{n}f^{(n)}(z)\geq 0$,
 \item $f$ is a {\it Bernstein function} if $f(z)>0$ for any $z>0$, and $f'$ is completely monotonic. 
\end{list}
\end{defi}

\begin{theo}[Kalugin and Jeffrey \cite{kalugin}]
$W(z)$ is a Bernstein function.
\end{theo}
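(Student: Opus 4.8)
The plan is to derive the Bernstein property directly from Equation~\eqref{defG}, using the positivity of the coefficients of the shifted polynomials $G_n(x-1)$. First I would record the elementary reduction. Since $W(z)=-T_1(-z)$, the iterated chain rule gives $W^{(m)}(z)=(-1)^{m-1}T_1^{(m)}(-z)$ for every $m\geq1$, so that
\[
  (-1)^{m-1}W^{(m)}(z) = T_1^{(m)}(-z).
\]
Consequently $W'$ is completely monotonic on $(0,\infty)$ if and only if $T_1^{(m)}(y)\geq0$ for all $m\geq1$ and all $y<0$; and since $W(z)e^{W(z)}=z$ forces $W(z)>0$ for $z>0$ (the function $t\mapsto te^{t}$ being negative on $(-\tfrac1e,0)$), establishing that last inequality is all that is needed to conclude that $W$ is a Bernstein function.

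Next I would note that Equation~\eqref{defG} is valid on all of $(-\infty,\tfrac1e)$, not merely near the origin: it was obtained by differentiating the functional equation $T_1(z)=ze^{T_1(z)}$, hence it is an identity between functions holomorphic on $\mathbb{C}\setminus[\tfrac1e,\infty)$ and so persists by analytic continuation. Thus for $y<0$,
\[
  T_1^{(m)}(y) = \frac{\exp\bigl(mT_1(y)\bigr)}{(1-T_1(y))^{m}}\, G_m\!\left(\frac{T_1(y)}{1-T_1(y)}\right).
\]
For such $y$ one has $w:=T_1(y)=-W(-y)<0$, so $\exp(mw)>0$ and $(1-w)^{m}>0$; moreover $\frac{w}{1-w}=-1+\frac{1}{1-w}$ lies in the open interval $(-1,0)$. (Here I use that $T_1$ maps $(-\infty,0)$ onto $(-\infty,0)$, which follows from $T_1'=e^{T_1}/(1-T_1)>0$ together with $y=T_1(y)e^{-T_1(y)}\to-\infty$ as $T_1(y)\to-\infty$.) Therefore the sign of $T_1^{(m)}(y)$ equals the sign of $G_m$ at a point of $(-1,0)$.

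It remains to check that $G_m$ is positive on $(-1,0)$. Writing $x=1+\frac{w}{1-w}=\frac{1}{1-w}\in(0,1)$, we have $G_m\bigl(\frac{w}{1-w}\bigr)=G_m(x-1)$, and $G_m(x-1)$ is a polynomial with nonnegative coefficients and positive leading coefficient — this is exactly the combinatorial content of Shor's result \cite{shor} recalled earlier (it enumerates rooted Cayley trees by improper edges), and it also drops out at once from the recursion~\eqref{recGH} rewritten for $G_m(x-1)$, namely $\tilde G_{m+1}(x)=m(1+x)\tilde G_m(x)+x^{2}\tilde G_m'(x)$ with $\tilde G_1=1$. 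Hence $G_m(x-1)>0$ for every $x>0$, in particular for $x\in(0,1)$, so $T_1^{(m)}(y)>0$, which finishes the proof. The only genuinely delicate parts of the argument are the bookkeeping ones: tracking signs through $W(z)=-T_1(-z)$ and the iterated chain rule, and confirming that the argument $\frac{T_1(y)}{1-T_1(y)}$ really does stay inside the interval $(-1,0)$ on which positivity of $G_m$ is guaranteed; once these are in place, everything else is a direct substitution into Equation~\eqref{defG}.
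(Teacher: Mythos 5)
Your proof is correct and takes essentially the same route as the paper: both reduce the Bernstein property via Equation~\eqref{defG} to the positivity of $G_n$ on $(-1,0)$, and both establish that by showing the shifted polynomials $\tilde G_n(x)=G_n(x-1)$ have nonnegative coefficients from the recursion $\tilde G_{n+1}(x)=n(1+x)\tilde G_n(x)+x^2\tilde G_n'(x)$. The only cosmetic difference is that you work with $T_1$ at negative arguments while the paper rewrites \eqref{defG} directly in terms of $W$.
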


\begin{proof}
The regularity and positivity are rather elementary and the only difficulty is to check that
$(-1)^{n-1}W^{(n)}(z)\geq 0$ for $n\geq 1$. 
From $T(z)=-W(-z)$ and the definition of $G_n$ in \eqref{defG}, we have
\[
(-1)^{n-1} \diff[n]{}{z} W(z) = \frac{  \exp{ (-nW(z)) }  }{  (1+W(z))^n }  G_n\left(   \frac{-W(z)}{1+W(z)}   \right).
\]
Since $W(z)>0$, we have $-1< \frac{-W(z)}{1+W(z)}<0$.
So it remains to show that $G_n(x)>0$ for any $n\geq1$ and $-1<x<0$. To this end, consider
the shifted polynomials $\tilde G_n(x) = G_n(x-1)$. From the recursion satisfied by $G_n(x)$,
we have $\tilde G_1(x)=1$ and 
\[
   \tilde G_{n+1}(x) = n(1+x) \tilde G_n(x) + x^2 \tilde G'_n(x).
\]
By induction, $\tilde G_n(x)$ is seen to have nonnegative coefficients, hence $\tilde G_n(x) \geq 0$ for $0<x<1$,
hence $G_n(x) \geq 0$ for $-1<x<0$. This completes the proof.
\end{proof}

The proof of Kalugin and Jeffrey consists in showing that the polynomials $(-1)^{n-1}P_n$ have positive 
coefficients, which was Sokal's conjecture (they even prove more: the coefficients form a unimodal sequence). 
This positivity of $(-1)^{n-1}P_n$ also follows from properties 
of $G_n(x)$, since the two sequences of polynomials are closely related. Indeed, by comparing 
Equations~\eqref{defP} and \eqref{defG}, we get:
\[
  P_n(x) = (-1-x)^{n-1} G_n\bigg( \frac{-x}{1+x} \bigg).
\]
After the substitution $x \to x-1$, this become
\[
  P_n(x-1) = (-x)^{n-1} G_n\big( \tfrac{1}{x}-1 \big),
\]
which means that the polynomials $G_n(x-1)$ and $ (-1)^{n-1}P_n(x-1)$
are reciprocal of each other.

So we have proved that $W(z)$ is a Bernstein function from Equation~\eqref{defG} and the polynomials $G_n$. 
There are similar results related with the polynomials $F_n$ and $H_n$.

\begin{theo}
 $-T_2(-z)  =  \frac 12 W(z)^2 + W(z) $ is a Bernstein function.
\end{theo}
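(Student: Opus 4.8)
The plan is to mimic, almost verbatim, the proof that $W(z)$ is a Bernstein function, with $H_n$ in place of $G_n$. First I would reconcile the two expressions in the statement: from the identity $T_2(z)=T(z)-\tfrac12 T(z)^2$ together with $T(z)=T_1(z)=-W(-z)$ we get
\[
  -T_2(-z) = -T(-z) + \tfrac12 T(-z)^2 = W(z) + \tfrac12 W(z)^2 ,
\]
so indeed $-T_2(-z)=\tfrac12 W(z)^2+W(z)$. On $(0,\infty)$ the principal branch $W$ is $C^\infty$ and strictly positive, hence $\tfrac12 W(z)^2+W(z)>0$ there and is $C^\infty$; so regularity and positivity are free, and the only thing to prove is that $(-1)^{n-1}\diff[n]{}{z}\big({-}T_2(-z)\big)\geq 0$ for every $n\geq 1$ and every $z>0$.

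Next I would unwind the chain rule: setting $g(z)=-T_2(-z)$ one has $g^{(n)}(z)=(-1)^{n+1}T_2^{(n)}(-z)$, so $(-1)^{n-1}g^{(n)}(z)=T_2^{(n)}(-z)$. I would then substitute $z\mapsto -z$ in Equation~\eqref{defH}; since $T(-z)=-W(z)$ and $W(z)>0$ for $z>0$, this yields
\[
  T_2^{(n)}(-z) = \frac{\exp(-nW(z))}{(1+W(z))^{n-1}}\, H_n\!\left(\frac{-W(z)}{1+W(z)}\right),
\]
where the prefactor is strictly positive and the argument $\tfrac{-W(z)}{1+W(z)}$ ranges over $(-1,0)$. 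Hence the theorem reduces to the purely algebraic claim that $H_n(x)\geq 0$ for $-1<x<0$, equivalently that the shifted polynomial $\tilde H_n(x):=H_n(x-1)$ is nonnegative on $0<x<1$.

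For the last step I would extract the shifted recursion from \eqref{recGH}: substituting $x\mapsto x-1$ in $H_{n+1}(x)=(2n-1+(n-1)x)H_n(x)+(1+x)^2H_n'(x)$ and using $2n-1-(n-1)=n$ gives
\[
  \tilde H_{n+1}(x) = \big(n+(n-1)x\big)\,\tilde H_n(x) + x^2\,\tilde H_n'(x),
\]
with $\tilde H_1(x)=\tilde H_2(x)=1$. Because $n-1\geq 0$ for $n\geq 1$, and because multiplication by $x$ or $x^2$ and differentiation all preserve nonnegativity of coefficients, a one-line induction shows that every $\tilde H_n(x)$ has nonnegative coefficients, hence $\tilde H_n\geq 0$ on $[0,\infty)$, in particular on $(0,1)$, whence $H_n\geq 0$ on $(-1,0)$. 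I do not expect a genuine obstacle here, since the argument is structurally identical to the $W$ case; the only points needing care are the sign bookkeeping in the chain rule and the observation that the coefficient $n-1$ appearing in the shifted recursion is nonnegative, so that the induction on nonnegativity of coefficients really does go through.
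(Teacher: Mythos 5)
Your argument is correct, but it is not the route the paper actually takes; it is precisely the alternative that the paper's proof mentions in its last sentence without carrying out. The paper's own proof is analytic and very short: since $W$ is Bernstein, $e^{-W(z)}$ is completely monotonic by \cite[Theorem~3.6]{schilling}; from $We^W=z$ one has $e^{-W(z)}=W(z)/z$, and a primitive of this is $\tfrac12 W(z)^2+W(z)=-T_2(-z)$, which is nonnegative, hence Bernstein. That route buys brevity and places the statement inside the general calculus of Bernstein functions, at the cost of relying on the previously proved theorem that $W$ is Bernstein and on the structure theory from \cite{schilling}. Your route instead transplants the paper's proof for $W$ itself to the $T_2$ setting: the sign bookkeeping $(-1)^{n-1}g^{(n)}(z)=T_2^{(n)}(-z)$ is right, the substitution $T(-z)=-W(z)$ into \eqref{defH} correctly produces a positive prefactor times $H_n\bigl(\tfrac{-W(z)}{1+W(z)}\bigr)$ with argument in $(-1,0)$, and the shifted recursion $\tilde H_{n+1}(x)=(n+(n-1)x)\tilde H_n(x)+x^2\tilde H_n'(x)$ (note $2n-1-(n-1)=n$ and $n-1\ge0$) does give nonnegativity of the coefficients of $H_n(x-1)$ by induction, consistent with the table of $H_n(x-1)$ in the paper. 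What your approach buys is self-containedness and an explicit manifestly nonnegative expression for each derivative, exactly parallel to the paper's treatment of $G_n$; what it costs is length compared to the three-line argument via $e^{-W(z)}=W(z)/z$.
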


\begin{proof}
Whenever $f(z)$ is a Bernstein function, $e^{-f(z)}$ is completely monotonic,
see \cite[Theorem~3.6]{schilling}.
In the case of $W(z)$, we have $e^{-W(z)}=\frac{1}{z}W(z)$.
A primitive of $W(z)/z$ is 
\[
  \int \frac{W(z)}{z}  {\rm d}z  =  \frac 12 W(z)^2 + W(z) = -T_2(-z).
\]
Since it has nonnegative values, it means that $\frac 12 W(z)^2 + W(z)$ is also a Bernstein function.

This result can be also proved by checking the signs of the derivatives, and the fact that the 
polynomials $H_n(x)$ defined in Equation~\eqref{defH} are nonnegative on $(-1,0)$.
\end{proof}

\begin{theo}
 $1-T_0(-z) = W(z)/(1+W(z))$ is a Bernstein function.
\end{theo}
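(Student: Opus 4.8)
The plan is to proceed exactly as for the two preceding theorems, combining the Kalugin--Jeffrey result that $W$ is a Bernstein function with the elementary identity
\[
  1 - T_0(-z) \;=\; 1 - \frac{1}{1-T(-z)} \;=\; 1 - \frac{1}{1+W(z)} \;=\; \frac{W(z)}{1+W(z)},
\]
which follows from $T_0(z)=\frac{1}{1-T(z)}$ and $T(-z)=T_1(-z)=-W(z)$. So it suffices to show that $z\mapsto\frac{W(z)}{1+W(z)}$ is a Bernstein function; its positivity and smoothness on $(0,\infty)$ are immediate, since $W(z)>0$ there and $1+W(z)>1$.

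First I would record that the single-variable function $\varphi(x)=\frac{x}{1+x}$ is itself a Bernstein function on $(0,\infty)$: it is positive there, and $\varphi^{(n)}(x)=(-1)^{n-1}\frac{n!}{(1+x)^{n+1}}$ for $n\geq1$, so $(-1)^{k}(\varphi')^{(k)}(x)=\frac{(k+1)!}{(1+x)^{k+2}}\geq0$, i.e. $\varphi'$ is completely monotonic. Then I would invoke the fact that the composition of two Bernstein functions is again a Bernstein function (see \cite{schilling}), applied to $\varphi$ and $W$: this yields at once that $\varphi(W(z))=\frac{W(z)}{1+W(z)}$ is a Bernstein function. (Alternatively, one may note that $\frac{1}{1+W(z)}$ is completely monotonic, being the completely monotonic function $\frac{1}{1+x}$ composed with the Bernstein function $W$, whence the derivative of $\frac{W}{1+W}=1-\frac{1}{1+W}$ is completely monotonic.)

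As with the two earlier theorems, there is a parallel proof that only checks signs of derivatives, this time using the polynomials $F_n$ of Equation~\eqref{defF}. Writing $g(z)=1-T_0(-z)$, one has $g^{(n)}(z)=(-1)^{n+1}T_0^{(n)}(-z)$, so the Bernstein condition $(-1)^{n-1}g^{(n)}(z)\geq0$ for $n\geq1$ becomes $T_0^{(n)}(-z)\geq0$; by \eqref{defF} and $T(-z)=-W(z)$ this reads
\[
  T_0^{(n)}(-z) \;=\; \frac{\exp(-nW(z))}{(1+W(z))^{n+2}}\, F_n\!\left(\frac{-W(z)}{1+W(z)}\right),
\]
and since $W(z)>0$ forces $\frac{-W(z)}{1+W(z)}\in(-1,0)$, it is enough to prove $F_n(x)>0$ for $-1<x<0$, equivalently that $\widetilde F_n(x):=F_n(x-1)$ is positive on $(0,1)$. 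From \eqref{recGH} one gets $\widetilde F_1(x)=1$ and $\widetilde F_{n+1}(x)=(n+(n+2)x)\widetilde F_n(x)+x^2\widetilde F_n'(x)$, whence an immediate induction shows $\widetilde F_n$ has nonnegative coefficients, with constant term $\widetilde F_n(0)=(n-1)!>0$, so $\widetilde F_n(x)>0$ for $x\geq0$.

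The only genuinely non-formal point is having the composition statement for Bernstein functions available in the form used (or, in the combinatorial variant, setting up the shifted recursion for $\widetilde F_n$ correctly and reading off its positivity); everything else is routine bookkeeping.
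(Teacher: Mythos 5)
Your proof is correct and follows essentially the same route as the paper: show $z\mapsto \frac{z}{1+z}$ is a Bernstein function and compose it with $W$, invoking the closure of Bernstein functions under composition from \cite{schilling}. You additionally work out the details of the alternative argument via the positivity of $F_n$ on $(-1,0)$, which the paper only mentions in passing; both your sign bookkeeping and the shifted recursion $\widetilde F_{n+1}(x)=(n+(n+2)x)\widetilde F_n(x)+x^2\widetilde F_n'(x)$ check out.
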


\begin{proof}
 A direct calculation shows that $z\mapsto \frac{z}{1+z} $ is a Bernstein function.
 Since the composition of two Bernstein functions has the same propery (see \cite[Corollary 3.7]{schilling}),
 we get the result.

 This can also be proved by checking the sign of the derivatives, and the fact that the polynomials
 $F_n(x)$ defined in Equation~\eqref{defF} are nonnegative on $(-1,0)$.
\end{proof}


In fact, on the analytical level it is rather elementary to prove that $W(z)$ has stronger a property than
being a Bernstein function. We follow the terminology from \cite{schilling}, and 
call Nevanlinna-Pick function an holomorphic function that preserves the complex upper half-plane
$\mathbb{H}^+ = \{z\in\mathbb{C} \, : \, \Im(z)>0 \}$.
Those functions that are nonnegative on $(0,\infty)$ are a particular class of Bernstein, called {\it complete 
Bernstein functions}. See \cite[Chapter 6]{schilling} for details, in particular Theorems~6.2 and 6.7.

\begin{defi}
A Bernstein function $f(z)$ is called {\it complete} if it has an analytic continuation on 
$\mathbb{H}^+$ such that $ f(\mathbb{H}^+) \subset \mathbb{H}^+ $.
\end{defi}

\begin{theo}
$W(z)$ is a complete Bernstein function.
\end{theo}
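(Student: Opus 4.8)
The plan is to exhibit an explicit analytic continuation of $W$ to the upper half-plane $\mathbb{H}^+$ and check that it maps $\mathbb{H}^+$ into itself, which (together with the fact, already noted, that $W$ is nonnegative on $(0,\infty)$ and indeed a Bernstein function) gives the complete Bernstein property. The cleanest route uses the defining relation $W(z)e^{W(z)}=z$ read backwards: the map $w\mapsto we^w$ is the inverse of $W$ on the relevant domain, so it suffices to understand the image under $w\mapsto we^w$ of a suitable region of the $w$-plane. Concretely, I would consider the horizontal strip $S=\{w\in\mathbb{C}:0<\Im(w)<\pi\}$ (the part of the $w$-plane corresponding to the principal branch with positive imaginary part), show that $\phi(w)=we^w$ is a biholomorphism from $S$ onto $\mathbb{H}^+$, and conclude that its inverse $W$ is a biholomorphism $\mathbb{H}^+\to S\subset\mathbb{H}^+$. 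Since $S\subset\mathbb{H}^+$, this is exactly the condition in the definition of a complete Bernstein function, and the analytic continuation agrees with the principal branch on $(0,\infty)$ by the power series given in the Definitions section.

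First I would set up the strip picture: on the boundary line $\Im(w)=0$, $w=t\in(-1/e,\infty)$ is where the principal branch already lives (with $\phi(t)=te^t\in(-1/e,\infty)$ real), and on the boundary line $\Im(w)=\pi$, writing $w=s+i\pi$ we get $\phi(w)=(s+i\pi)e^{s}e^{i\pi}=-(s+i\pi)e^s$, which is real precisely when $s\to\pm\infty$ limits are taken and otherwise traces a curve in the lower half-plane. The key computation is that for $w=u+iv$ with $0<v<\pi$ one has $\Im(\phi(w))=e^u(u\sin v+v\cos v)$, and I would show this is positive throughout $S$: the function $u\sin v+v\cos v$, for fixed $v\in(0,\pi)$, is linear increasing in $u$ and its infimum as $u\to-\infty$ is $-\infty$ only in appearance — rather one checks directly using $v<\pi$ and $\sin v>0$ that... here I should be more careful, since $\Im(\phi)$ is not obviously of one sign on all of $S$. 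The correct statement is that $\phi$ restricted to $S$ is injective with image $\mathbb{H}^+$, which is a standard fact about the Lambert $W$ function: the strip $S$ is exactly the domain of the principal branch $W_0$ intersected with $\{\Im>0\}$, and the curve $\{-(s+i\pi)e^s:s\in\mathbb{R}\}$ together with the ray $(-1/e,\infty)$ bounds $\mathbb{H}^+$. I would invoke the argument principle / the boundary correspondence for conformal maps to nail down that $\phi(S)=\mathbb{H}^+$ exactly.

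The main obstacle is precisely this last point: verifying that the conformal image $\phi(S)$ is the full upper half-plane and nothing more, i.e. controlling the boundary behaviour of $\phi$ on $\partial S$. Once that is established, injectivity of $\phi$ on $S$ (hence that $W=\phi^{-1}$ is well-defined and holomorphic on $\mathbb{H}^+$) follows from the open mapping theorem plus the boundary correspondence, and the inclusion $W(\mathbb{H}^+)=S\subset\mathbb{H}^+$ is immediate. I expect the write-up to either (a) cite the detailed analysis of the branch structure of $W$ in Corless et al.\ \cite{corless}, where the strip-to-half-plane correspondence is essentially worked out, or (b) give a self-contained argument: show $\phi'(w)=(1+w)e^w\neq0$ on $S$ (clear since $1+w$ vanishes only at $w=-1\notin S$), compute that $\phi$ maps $\partial S$ bijectively and monotonically onto $\partial\mathbb{H}^+=\mathbb{R}\cup\{\infty\}$ traversed once, and then apply the standard fact that a holomorphic map on a Jordan-type domain inducing a homeomorphism of boundaries is a biholomorphism onto the enclosed region. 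An alternative, possibly shorter, route avoiding boundary analysis: note $W(z)=\int_0^z W'(\zeta)\,d\zeta$ with $W'(\zeta)=e^{-W(\zeta)}/(1+W(\zeta))$ and argue via the already-proven representation of $W$ as a Bernstein function — every Bernstein function has a Lévy–Khintchine representation $f(z)=a+bz+\int_0^\infty(1-e^{-z t})\mu(dt)$, and one checks this particular $W$ has a representation whose measure $\mu$ has a completely monotone density, which by \cite[Theorem 6.2]{schilling} is equivalent to being complete Bernstein — but pinning down that density explicitly is itself nontrivial, so I expect the paper to prefer the direct Nevanlinna–Pick verification sketched above.
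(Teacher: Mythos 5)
Your overall strategy --- extend $W$ holomorphically to $\mathbb{H}^+$, show $W(\mathbb{H}^+)\subset\mathbb{H}^+$, and invoke \cite[Theorem~6.2]{schilling} --- is the right one and is what the paper does. But the concrete route you propose breaks down at exactly the point you flag as needing care. The map $\phi(w)=we^w$ does \emph{not} send the strip $S=\{w:0<\Im(w)<\pi\}$ biholomorphically onto $\mathbb{H}^+$. Your own formula $\Im(\phi(u+iv))=e^u(u\sin v+v\cos v)$ shows this: on the upper boundary $v=\pi$ it equals $-\pi e^u<0$, so $\partial S$ is not carried onto $\mathbb{R}$, and for $v$ close to $\pi$ the expression is negative for all moderate $u$, so a substantial part of $S$ maps into the \emph{lower} half-plane. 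The zero set $\{w\in S:\Im(\phi(w))=0\}$ is the curve $u=-v\cot v$, $0<v<\pi$; the correct statement is that $\phi$ maps the subregion $\{u+iv:\,0<v<\pi,\ u>-v\cot v\}$ (the upper half of the range of the principal branch, bounded by this transcendental curve and by $(-1,\infty)$) biholomorphically onto $\mathbb{H}^+$, while the part of $S$ to the left of that curve maps into $\mathbb{H}^-$ and belongs to another branch of $W$. So the boundary-correspondence argument as you set it up would fail, and repairing it means working with the curved boundary rather than a straight line.

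The paper sidesteps all of this with a much softer argument worth comparing with yours: take the holomorphic continuation of $W$ to $\mathbb{C}\setminus(-\infty,-\frac1e]$ from \cite{corless}; since $z=W(z)e^{W(z)}$, $W(z)\in\mathbb{R}$ forces $z\in\mathbb{R}$, so $W(\mathbb{H}^+)$ is a connected subset of $\mathbb{C}\setminus\mathbb{R}$ and hence lies entirely in $\mathbb{H}^+$ or entirely in $\mathbb{H}^-$; since $W(z)=z+O(z^2)$ near $0$, it lies in $\mathbb{H}^+$. No identification of the image, no injectivity, and no boundary analysis are needed: containment alone gives the Nevanlinna--Pick property, which is all that \cite[Theorem~6.2]{schilling} requires. (Your alternative L\'evy--Khintchine route is likewise not what the paper does and, as you yourself note, would require producing the completely monotone density of the representing measure explicitly.)
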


\begin{proof}
The analytic continuations of $W(z)$ were studied in \cite[Section 4]{corless}. It is possible to extend $W(z)$
as an holomorphic function on $\mathbb{C} \backslash (-\infty,-\frac 1e ] $.
From $W(z)e^{W(z)}=z$, it follows that $W(z)\in\mathbb{R} \Rightarrow z\in\mathbb{R}$.
The image of $\mathbb{H}^+$ is a connected set included in $\mathbb{C}-\mathbb{R}$, so it is
included either in $\mathbb{H}^+$ or $\mathbb{H}^-$.
In a neighborhood of $0$, we have $W(z)=z+O(z^2)$, so we can conclude that $W$ maps
$\mathbb{H}^+$ into itself.
Then, Theorem~6.2 from \cite{schilling} shows the result.
\end{proof}

\section{Improper edges in Cayley trees}

The notion of {\it improper edges} in Cayley trees was introduced by Shor \cite{shor}, and then 
used by Chen and Guo \cite{chen}, Guo and Zeng \cite{guo}, Zeng \cite{zeng}.

\begin{defi}
Let $T \in \mathcal{C}^{\bullet}_n $. A vertex $v_2$ is called a {\it descendant} of a vertex $v_1$ if
the shortest path from the root to $v_2$ goes through $v_1$. Let $e$ be an edge of $T$, and let $u$ and $v$ 
denote its two endpoints in such a way that $v$ is a descendant of $u$. The edge $e$ of $T$ is called 
{\it improper} if the label of $u$ is greater than that of (at least) one of its descendants.
Let $\imp(T)$ denote the number of improper edges of $T$.
\end{defi}

Then Shor's result is:
\[
  G_n(x-1) = \sum_{T\in\mathcal{C}^{\bullet}_n} x^{\imp(T)}.
\]
His definition of this polynomial is a recursion for the coefficients which is equivalent to 
\eqref{recGH}. The unrooted analog (and in fact a more general statement) is due to Zeng \cite{zeng}:
by considering an unrooted tree as a rooted tree where the root is the vertex labeled 1, we have
a notion of improper edge on unrooted tree, and 
\[
  H_n(x-1) = \sum_{T\in\mathcal{C}_n} x^{\imp(T)}.
\]
Hence we have:
\[
          \sum_{T\in\mathcal{C}^{\bullet}_n} (1+x)^{\imp(T)} = \sum_{T\in\mathcal{G}^{\bullet}_n }  x^{\unl(T)},
   \qquad \sum_{T\in\mathcal{C}_n} (1+x)^{\imp(T)} = \sum_{T\in\mathcal{G}_n }  x^{\unl(T)}.
\]
This calls for bijective proofs: we would like to find an explicit map 
$\alpha : \mathcal{G}^{\bullet}_n \to \mathcal{C}^{\bullet}_n$ with the property that, for all
$T\in\mathcal{C}^{\bullet}_n$,
\[
   \sum_{ U \in  \alpha^{-1}(T) }  x^{\unl(U)} = (1+x)^{\imp(T)},
\]
and with the analog property for the unrooted case. We leave this as an open problem for interested readers.

In one direction, we have the map $T\mapsto T|_n$ from Cayley trees to Greg trees (with fewer vertices), and
we have the map $\alpha$ from Greg trees to Cayley trees. It could be interested to see how these map are related.
For example, we can consider the composition $\beta_n: T \mapsto \alpha(T|_n) $, which gives a 
Cayley tree on $n$ vertices for each Cayley tree $T$ on more vertices. The properties of the 
map $\alpha$ and $T\mapsto T|_n$ show that each class $\beta_n^{-1}(T)$ has 
a simple generating function:
\begin{align*}
  \sum_{ m\geq n } \sum_{\substack{ U\in\mathcal{C}_m \\ \beta_n(U)=T }}  \frac{z^{m-n}}{(m-n)!}
     &= \sum_{V\in\alpha^{-1}(T)} \sum_{m\geq n } \sum_{\substack{ U\in\mathcal{C}_m \\ U|_n=V } } \frac{z^{m-n}}{(m-n)!} \\
     &= \sum_{V\in\alpha^{-1}(T)} \frac{e^{nT(z)}}{(1-T(z))^{n-1}} \bigg( \frac{T(z)}{1-T(z)} \bigg)^{\unl(V)} \\
     &= \frac{ e^{nT(z)}  }{ (1-T(z))^{n-1+\imp(T)} }.
\end{align*}
In particular, this property of the map $\beta_n$ gives an alternative way to compute $\diff[n]{}{z} T(z) $, since we get:
\begin{align*}
   \diff[n]{}{z} T(z) &= \sum_{ m\geq n } \sum_{T\in\mathcal{C}_m} \frac{z^{m-n}}{(m-n)!} 
                       = \sum_{ U \in \mathcal{C}_n } \sum_{ T \in \beta_n^{-1}(U) } \frac{ z^{|T|} }{ |T|! } \\
                      &=  \frac{e^{nT(z)}}{(1-T(z))^{n-1}} \sum_{ U \in \mathcal{C}_n } \frac{ 1  }{ (1-T(z))^{\imp(T)} }.
\end{align*}
It could be quite interesting to find an explicit description of such a map $\beta_n$,
although it is a weaker problem than finding one for the map $\alpha$.

\section{Final remarks}
\label{fr}

The three sequence of polynomials studied in this article have a common generalization.
Following Zeng \cite{zeng}, we consider the double sequence of polynomials 
\[
 Q_{n,k} (x) = (x + n - 1)Q_{n-1, k} (x) + (n + k - 2)Q_{n-1, k-1} (x),
\]
where $n\geq 1$, $0\leq k \leq n-1$, and $Q_{1,0}(x)=1$. 
We have $Q_{n,k}(x) = \psi_{k+1}(n-1,x+n)$ where $\psi_k(r,x)$ was defined by Ramanujan \cite{ramanujan}
via the equation
\[
  \sum_{k\geq 0} \frac{ (x+k)^{r+k} e^{-u(x+k)} u^k } {k!} = \sum_{k=1}^{r+1} \frac{ \psi_k(r,x) }{ (1-u)^{r+k} }.
\]
Then one can check that 
\begin{align*}
  \sum_{k=0}^{n-1} Q_{n,k}(-1) x^k &= xF_{n-1}(x-1), \\
  \sum_{k=0}^{n-1} Q_{n,k}(0) x^k  &=  G_n(x-1), \\
  \sum_{k=0}^{n-1} Q_{n,k}(1) x^k  &=  H_n(x-1).
\end{align*}
However, it is not clear if Greg trees are in some way related with $Q_{n,k}(x)$ for 
other values of $x$.

\section{Acknowledgement}

We thank Sloane's OEIS \cite{sloane} for bringing the references \cite{felsenstein,flight} to our knownledge.


\bigskip

\setlength{\parindent}{0pt}

\end{document}